\numberwithin{equation}{section}
\numberwithin{figure}{section}
\theoremstyle{plain}
\newtheorem{thm}{\protect\theoremname}[section]
  \theoremstyle{plain}
  \newtheorem{conjecture}[thm]{\protect\conjecturename}
  \theoremstyle{definition}
  \newtheorem{problem}[thm]{\protect\problemname}
  \theoremstyle{definition}
  \newtheorem{defn}[thm]{\protect\definitionname}
  \theoremstyle{plain}
  \newtheorem{lem}[thm]{\protect\lemmaname}
  \providecommand{\conjecturename}{Conjecture}
  \providecommand{\definitionname}{Definition}
  \providecommand{\lemmaname}{Lemma}
  \providecommand{\problemname}{Problem}
\providecommand{\theoremname}{Theorem}
\begin{document}
\vspace{0.5in}

\title{Combined Properties of Finite Sums \& Finite Products near Zero }

\author{Tanushree Biswas}

\address{Department of Mathematics, University of Kalyani, Kalyani-741235,
West Bengal, India}

\email{tanushreebiswas87@gmail.com}

\keywords{Ramsey theory, Central sets near zero, Finite Sums, Image partition
near zero, IP set near zero.}
\begin{abstract}
\textsl{\emph{It was proved that whenever $\mathbb{N}$ is partitioned
into finitely many cells, one cell must contain arbitrary length geo-arithmetic
progressions. It was also proved that arithmetic and geometric progressions
can be nicely intertwined in one cell of partition, whenever $\mathbb{N}$
is partitioned into finitely many cells. In this article we shall
prove that similar types of results also hold near zero in some suitable
dense subsemigroup $S$ of $\left(\left(0,\,\infty\right),\,+\right)$,
using the }}Stone-\v{C}ech compactification $\beta S$.
\end{abstract}

\maketitle

\section{Introduction}

One of the famous Ramsey theoretic results is van der Waerden's Theorem
\cite{waerden1927van}, which says that whenever the set $\mathbb{N}$
of natural numbers is divided into finitely many classes, one of these
classes contains arbitrarily long arithmetic progressions. The analogous
statement about geometric progressions is easily seen to be equivalent
via the homomorphisms $p\colon\,\left(\mathbb{N},\,+\right)\to\left(\mathbb{N},\,\cdotp\right)$
such that $p\left(x\right)=2^{x}$, and $q\colon\,\left(\mathbb{N}\setminus\left\{ 1\right\} ,\,\cdotp\right)\to\left(\mathbb{N},\,+\right)$,
where $q\left(x\right)$ is the length of the prime factorization
of $x$.

It has been shown in \cite[Theorem 3.11]{beiglbock2008some} that
any set which is multiplicatively large, that is a piecewise syndetic
IP set in $\left(\mathbb{N},\,\cdotp\right)$ must contain substantial
combined additive and multiplicative structure; in particular it must
contain arbitrarily large geo-arithmetic progressions, that is, sets
of the form
\[
\left\{ r^{j}\left(a+id\right)\colon\,i,j\in\left\{ 1,2,\ldots,k\right\} \right\} .
\]
 A well known extension of van der Waerden's Theorem allows one to
get the additive increment of the arithmetic progression in the same
cell as the arithmetic progression. Similarly for any finite partition
of $\mathbb{N}$ there exist some cell $A$ and $b,r\in\mathbb{N}$
such that $\left\{ r,b,br,\ldots,br^{k}\right\} \subseteq A$. It
is proved in \cite[Theorem 1.5]{beiglbock2008some} that these two
facts can be intertwined:
\begin{thm}
Let $r,k\in\mathbb{N}$ and $\mathbb{N}=\bigcup_{i=1}^{r}A_{i}$.
Then there exist $s\in\left\{ 1,2,\dots,r\right\} $and $a,b,d\in A_{s}$,
such that\\
\begin{align*}
\left\{ b\left(a+id\right)^{j}\colon\,i,j\in\left\{ 0,1,\ldots,k\right\} \right\} \bigcup\left\{ bd^{j}\colon\,j\in\left\{ 0,1,\ldots,k\right\} \right\} \\
\bigcup\left\{ a+id\colon\,i\in\left\{ 0,1,\ldots,k\right\} \right\}  & \subseteq A_{s}
\end{align*}
\end{thm}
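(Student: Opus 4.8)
The plan is to pass to the Stone-\v{C}ech compactification and exploit the algebra of $(\beta\mathbb{N},\cdot)$. First I would fix a minimal idempotent $p$ in the multiplicative semigroup $(\beta\mathbb{N},\cdot)$; since $\mathbb{N}=\bigcup_{i=1}^{r}A_{i}$ and $p$ is an ultrafilter, exactly one cell, say $A_{s}$, lies in $p$, and this $A_{s}$ is therefore multiplicatively central. Centrality (membership in a minimal idempotent) is the single hypothesis I would carry through the whole argument, because it simultaneously supplies the multiplicative self-similarity needed for the products $b(a+id)^{j}$ and $bd^{j}$, and --- less obviously --- enough largeness to force the additive progression $\{a+id\}$ into the same cell.

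For the additive skeleton I would first record that $A_{s}$, being multiplicatively central, is multiplicatively piecewise syndetic. Hence there is a finite $F\subseteq\mathbb{N}$ with $\bigcup_{t\in F}t^{-1}A_{s}$ multiplicatively thick, so for every $L$ there is $y$ with $y\{1,2,\dots,L\}\subseteq\bigcup_{t\in F}t^{-1}A_{s}$. Colouring each $g\in\{1,\dots,L\}$ by a $t\in F$ witnessing $tyg\in A_{s}$ and applying van der Waerden's theorem inside $\{1,\dots,L\}$ (with $L$ large) yields a constant colour $t$ and a long monochromatic progression, which after scaling gives an arithmetic progression $\{a+id:i\le k\}\subseteq A_{s}$ whose increment is a controlled multiple of $y$. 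This is exactly the mechanism underlying the geo-arithmetic progression result quoted in the introduction, and I would reuse it, but now keeping track of the auxiliary factors so that $d$ itself and the common multiplier can later be arranged to lie in $A_{s}$.

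The multiplicative part I would obtain from the self-similarity of the idempotent. Writing $A_{s}^{\star}=\{x\in A_{s}:x^{-1}A_{s}\in p\}$, one has $A_{s}^{\star}\in p$ and $x^{-1}A_{s}^{\star}\in p$ for every $x\in A_{s}^{\star}$, so finite products of elements successively chosen from shrinking members of $p$ remain in $A_{s}$. I would run the additive construction above inside $A_{s}^{\star}$ and then feed its output into an inductive product construction (equivalently, a direct application of the multiplicative Central Sets Theorem to $A_{s}$): choosing the base $a$, the increment $d$, and a common factor $b$ from appropriate members of $p$, I would build up the powers so that every $b(a+id)^{j}$ and every $bd^{j}$ falls in $A_{s}$, with $a,b,d\in A_{s}$ as well. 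The bookkeeping --- intertwining one new ``power level'' $j$ and one new progression step $i$ at a time while never leaving a member of $p$ --- is where the argument becomes delicate.

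The main obstacle, and the reason one cannot simply quote a single idempotent, is the simultaneity of additive and multiplicative membership: the smallest ideals of $(\beta\mathbb{N},+)$ and $(\beta\mathbb{N},\cdot)$ are disjoint, so no ultrafilter is central for both operations, and there is no ``bi-idempotent'' to hand us the whole configuration at once. The resolution is the deliberately asymmetric scheme above --- take the idempotent in the multiplicative structure, extract the additive progression from multiplicative piecewise syndeticity via van der Waerden, and then re-absorb that progression into the multiplicative self-similarity of $p$ --- so that all of $a,b,d$, the dilated progression $\{b(a+id)^{j}\}$, and the geometric string $\{bd^{j}\}$ are certified to lie in the one cell $A_{s}$ by membership in the single ultrafilter $p$.
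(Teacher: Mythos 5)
This theorem is not proved in the paper at all: it is quoted verbatim from \cite[Theorem 1.5]{beiglbock2008some}, so the only proof to compare against is the one in that reference. Your opening moves match its framework: fix a minimal idempotent $p$ of $(\beta\mathbb{N},\cdot)$, take the cell $A_{s}\in p$, and use multiplicative piecewise syndeticity together with the strengthened van der Waerden theorem to extract $\{a+id\colon i\le k\}\cup\{d\}\subseteq A_{s}$. That additive step is sound (colouring $g\mapsto t$ with $tyg\in A_{s}$ and rescaling does produce the progression together with its increment inside $A_{s}$).

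The gap is in the multiplicative half, and it is exactly the crux of the theorem. To choose a single $b$ with $b(a+id)^{j}\in A_{s}$ and $bd^{j}\in A_{s}$ for all $i,j\le k$, you need $b\in A_{s}^{\star}\cap\bigcap_{e\in E}e^{-1}A_{s}^{\star}$ where $E=\{(a+id)^{j}\colon i,j\le k\}\cup\{d^{j}\colon j\le k\}$, and for that intersection to be a member of $p$ you must first arrange that \emph{every power} $(a+id)^{j}$ and $d^{j}$ already lies in $A_{s}^{\star}$. None of the tools you name can deliver this: an idempotent gives $c^{-1}A_{s}^{\star}\in p$ for $c\in A_{s}^{\star}$ but says nothing about $c^{-2}A_{s}^{\star}$, so ``finite products of elements successively chosen from shrinking members of $p$'' only controls products of \emph{distinct} generators, never the $j$-th power of a single element; the multiplicative Central Sets Theorem has the same limitation, its products being indexed by pairwise disjoint finite sets. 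Forcing an arithmetic progression, together with its increment, to have all of its powers up to $k$ inside the prescribed central set $A_{s}^{\star}$ is the geo-arithmetic core of the result (the content of Theorem 3.11 of \cite{beiglbock2008some}, quoted in the introduction), and in that paper it is obtained from a Hales--Jewett-type combinatorial argument, not from idempotent self-similarity. Your sketch files this entire difficulty under ``the bookkeeping \ldots is where the argument becomes delicate,'' so as written the proposal assumes the hard step rather than proving it.
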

We know that if $A\subseteq\mathbb{N}$ belongs to every idempotent
in $\beta\mathbb{N}$, then it is called an $IP^{*}$ set. Given a
sequence $\langle x_{n}\rangle_{n=1}^{\infty}$ in $\mathbb{N}$,
we let $FP(\langle x_{n}\rangle_{n=1}^{\infty})$ be the product analogue
of Finite Sum. Given a sequence $\langle x_{n}\rangle_{n=1}^{\infty}$
in $\mathbb{N}$, we say that $\langle y_{n}\rangle_{n=1}^{\infty}$
is a \textit{sum subsystem} of $\langle x_{n}\rangle_{n=1}^{\infty}$
provided there is a sequence $\langle H_{n}\rangle_{n=1}^{\infty}$
of nonempty finite subsets of $\mathbb{N}$ such that $\max H_{n}<\min H_{n+1}$,
and $y_{n}=\sum_{t\in H_{n}}\,x_{t}$ for each $n\in\mathbb{N}$.
\begin{thm}
\label{ip*-1} Let $\langle x_{n}\rangle_{n=1}^{\infty}$ be a sequence
in $\mathbb{N}$ and $A$ be an IP{*} set in $(\mathbb{N},+)$. Then
there exists a sum subsystem $\langle y_{n}\rangle_{n=1}^{\infty}$
of $\langle x_{n}\rangle_{n=1}^{\infty}$ such that 
\[
FS(\langle y_{n}\rangle_{n=1}^{\infty})\cup FP(\langle y_{n}\rangle_{n=1}^{\infty})\subseteq A.
\]
\end{thm}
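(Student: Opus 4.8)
The plan is to satisfy the additive and the multiplicative demands through a \emph{single} additive idempotent, the point being that multiplicative dilates of an additive $IP^{*}$ set are again $IP^{*}$. Concretely, I would first prove the following key fact: for each fixed $\pi\in\mathbb{N}$ the set $\pi^{-1}A=\{x\in\mathbb{N}:\pi x\in A\}$ is again $IP^{*}$ in $(\mathbb{N},+)$. The map $M_{\pi}\colon x\mapsto\pi x$ is an endomorphism of $(\mathbb{N},+)$, so its continuous extension $\widetilde{M_{\pi}}\colon\beta\mathbb{N}\to\beta\mathbb{N}$ is a homomorphism of $(\beta\mathbb{N},+)$ and therefore carries any additive idempotent $q$ to an additive idempotent $\widetilde{M_{\pi}}(q)$. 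Since $A$ lies in every additive idempotent, $A\in\widetilde{M_{\pi}}(q)$, which is exactly the statement $\pi^{-1}A\in q$; as $q$ was arbitrary, $\pi^{-1}A$ is $IP^{*}$. Because $IP^{*}$ sets are closed under finite intersection, every finite "product correction" $\bigcap_{\pi\in F}\pi^{-1}A$ again belongs to every additive idempotent.

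Next I would fix the ultrafilter. Let $T=\bigcap_{m=1}^{\infty}\overline{FS(\langle x_{n}\rangle_{n=m}^{\infty})}$; this is a nonempty compact subsemigroup of $(\beta\mathbb{N},+)$, so it contains an additive idempotent $p$, and $p\in T$ means $FS(\langle x_{n}\rangle_{n=m}^{\infty})\in p$ for every $m$. Since $A$ is $IP^{*}$ we have $A\in p$; set $A^{\star}=\{x\in A:-x+A\in p\}$, where $-x+A$ abbreviates $\{y:x+y\in A\}$, and recall the standard facts that $A^{\star}\in p$ and that $-x+A^{\star}\in p$ whenever $x\in A^{\star}$.

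I would then build $\langle y_{n}\rangle$ by induction, keeping two invariants after stage $n$: that $FS(\langle y_{i}\rangle_{i=1}^{n})\subseteq A^{\star}$ and that $FP(\langle y_{i}\rangle_{i=1}^{n})\subseteq A$. At stage $n+1$, writing $k=\max H_{n}$, consider
\[
B=A^{\star}\cap\bigcap_{s\in FS(\langle y_{i}\rangle_{i=1}^{n})}(-s+A^{\star})\cap\bigcap_{\pi\in FP(\langle y_{i}\rangle_{i=1}^{n})}\pi^{-1}A\cap FS(\langle x_{t}\rangle_{t=k+1}^{\infty}).
\]
Each of the finitely many sets on the right lies in $p$: $A^{\star}$ and the translates $-s+A^{\star}$ by the idempotent facts, the dilates $\pi^{-1}A$ by the key lemma, and the tail sum set because $p\in T$. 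Hence $B\in p$, so $B\neq\emptyset$; I choose $y_{n+1}\in B$ and write $y_{n+1}=\sum_{t\in H_{n+1}}x_{t}$ with $\min H_{n+1}>k$, which is legitimate since $y_{n+1}\in FS(\langle x_{t}\rangle_{t=k+1}^{\infty})$. Membership of $y_{n+1}$ in each $-s+A^{\star}$ forces $s+y_{n+1}\in A^{\star}$, and membership in each $\pi^{-1}A$ forces $\pi y_{n+1}\in A$, so both invariants persist and the new finite sums and products all land in $A$. The resulting $\langle y_{n}\rangle$ is a sum subsystem of $\langle x_{n}\rangle$ with $FS(\langle y_{n}\rangle)\cup FP(\langle y_{n}\rangle)\subseteq A$.

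I expect the only genuinely non-routine ingredient to be the key lemma that multiplicative dilates of an additive $IP^{*}$ set remain $IP^{*}$; once that is available the product constraints cost nothing, since the corrections $\pi^{-1}A$ are automatically in $p$, and the remainder is the familiar Galvin--Glazer $A^{\star}$ construction carried out inside the tail subsemigroup $T$ so as to keep $\langle y_{n}\rangle$ a sum subsystem.
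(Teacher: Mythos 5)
Your proof is correct and follows essentially the same route as the source the paper cites for this theorem (Bergelson--Hindman, Theorem 2.6; Hindman--Strauss, Corollary 16.21): an additive idempotent $p$ in the tail semigroup $T$, the key observation that $\pi^{-1}A$ remains $IP^{*}$ because $x\mapsto\pi x$ extends to a homomorphism of $(\beta\mathbb{N},+)$, and the Galvin--Glazer induction with $A^{\star}$. The paper gives no independent proof of this statement, and its proof of the analogous Theorem \ref{ip*comb} near zero is the same construction, so there is nothing to add.
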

\begin{proof}
\cite[Theorem 2.6]{bergelson1994ip} or see \cite[Corollary 16.21]{hindman1998algebra}. 
\end{proof}
The algebraic structure of the smallest ideal of $\beta S$ has played
a significant role in Ramsey Theory. It is known that any central
subset of $(\mathbb{N},+)$ is guaranteed to have substantial additive
structure. But Theorem 16.27 of \cite{hindman1998algebra} shows that
central sets in $(\mathbb{N},+)$ need not have any multiplicative
structure at all. On the other hand, in \cite{bergelson1994ip} we
see that sets which belong to every minimal idempotent of N, called
central{*} sets, must have significant multiplicative structure.

In case of central{*} sets a similar result has been proved in \cite{de2007combined}
for a restricted class of sequences called minimal sequences, where
a sequence $\langle x_{n}\rangle_{n=1}^{\infty}$ in $\mathbb{N}$
is said to be a minimal sequence if 
\[
\bigcap_{m=1}^{\infty}\overline{FS(\langle x_{n}\rangle_{n=m}^{\infty})}\cap K(\beta\mathbb{N})\neq\emptyset.
\]

\begin{thm}
\label{central*} Let $\langle y_{n}\rangle_{n=1}^{\infty}$ be a
minimal sequence and $A$ be a $central^{*}$ set in $(\mathbb{N},+)$.
Then there exists a sum subsystem $\langle x_{n}\rangle_{n=1}^{\infty}$
of $\langle y_{n}\rangle_{n=1}^{\infty}$ such that 
\[
FS(\langle x_{n}\rangle_{n=1}^{\infty})\cup FP(\langle x_{n}\rangle_{n=1}^{\infty})\subseteq A.
\]
\end{thm}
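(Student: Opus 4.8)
The plan is to run the standard Galvin--Glazer--Hindman machinery inside a compact subsemigroup of $(\beta\mathbb{N},+)$ singled out by the minimal sequence. First I would set $T=\bigcap_{m=1}^{\infty}\overline{FS(\langle y_n\rangle_{n=m}^{\infty})}$, which is a compact subsemigroup of $(\beta\mathbb{N},+)$. Because $\langle y_n\rangle_{n=1}^{\infty}$ is a minimal sequence, $T\cap K(\beta\mathbb{N})\neq\emptyset$, and then the standard fact that $K(T)=T\cap K(\beta\mathbb{N})$ for such a subsemigroup lets me choose an idempotent $p=p+p\in T$ that is minimal in $\beta\mathbb{N}$. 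Since $A$ is central$^{*}$, it belongs to every minimal idempotent, so $A\in p$; note also that $FS(\langle y_n\rangle_{n=m}^{\infty})\in p$ for every $m$ because $p\in T$.

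The crucial point, and the step I expect to be the main obstacle, is to feed multiplicative information into the purely additive idempotent $p$. I would prove the lemma that for each $w\in\mathbb{N}$ the set $w^{-1}A=\{x:wx\in A\}$ is again central$^{*}$, hence lies in $p$. The map $m_w\colon x\mapsto wx$ is an additive homomorphism of $(\mathbb{N},+)$, so its continuous extension $\widetilde{m_w}$ is a homomorphism of $(\beta\mathbb{N},+)$ onto the compact subsemigroup $\overline{w\mathbb{N}}$, and $\widetilde{m_w}(p)$ is therefore an idempotent. Because $w\mathbb{N}$ is syndetic it is piecewise syndetic, so $\overline{w\mathbb{N}}\cap K(\beta\mathbb{N})\neq\emptyset$ and hence $K(\overline{w\mathbb{N}})=\overline{w\mathbb{N}}\cap K(\beta\mathbb{N})\subseteq K(\beta\mathbb{N})$; combined with $\widetilde{m_w}[K(\beta\mathbb{N})]=K(\overline{w\mathbb{N}})$ this shows $\widetilde{m_w}(p)$ is a minimal idempotent. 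As $A$ is central$^{*}$ we get $A\in\widetilde{m_w}(p)$, which is exactly $w^{-1}A\in p$.

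With these facts in hand I would build $\langle x_n\rangle_{n=1}^{\infty}$ by the usual recursion, writing $A^{\star}=\{a\in A:-a+A\in p\}\in p$ and using that $-a+A^{\star}\in p$ for each $a\in A^{\star}$. At stage $n$, having chosen $H_1<\cdots<H_n$ with $x_i=\sum_{t\in H_i}y_t$, all elements of $FS(\langle x_i\rangle_{i=1}^{n})$ lying in $A^{\star}$ and all elements of $FP(\langle x_i\rangle_{i=1}^{n})$ lying in $A$, I form $E_n=A^{\star}\cap\bigcap\{-s+A^{\star}:s\in FS(\langle x_i\rangle_{i=1}^{n})\}\cap\bigcap\{w^{-1}A:w\in FP(\langle x_i\rangle_{i=1}^{n})\}$. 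Every set in this finite intersection is a member of $p$ (the additive pieces by the $A^{\star}$ machinery, the multiplicative pieces by the lemma above), so $E_n\in p$. Since $FS(\langle y_t\rangle_{t>\max H_n})\in p$ as well, I can pick $x_{n+1}=\sum_{t\in H_{n+1}}y_t\in E_n$ with $\min H_{n+1}>\max H_n$. Membership of $x_{n+1}$ in the first two families keeps $FS(\langle x_i\rangle_{i=1}^{n+1})\subseteq A^{\star}$, and membership in the $w^{-1}A$ families forces $w\cdot x_{n+1}\in A$ for every $w\in FP(\langle x_i\rangle_{i=1}^{n})\cup\{1\}$, so $FP(\langle x_i\rangle_{i=1}^{n+1})\subseteq A$ as well. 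The resulting $\langle x_n\rangle_{n=1}^{\infty}$ is a sum subsystem of $\langle y_n\rangle_{n=1}^{\infty}$ with $FS(\langle x_n\rangle_{n=1}^{\infty})\cup FP(\langle x_n\rangle_{n=1}^{\infty})\subseteq A$, as required. The one genuinely new ingredient over Theorem \ref{ip*-1} is the preservation of the central$^{*}$ property under $A\mapsto w^{-1}A$; everything else is the same additive recursion.
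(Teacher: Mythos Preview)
Your argument is correct and is essentially the standard proof found in the literature cited by the paper. Note that the paper does not give its own proof here: it simply refers to \cite[Theorem 2.4]{bergelson1994ip} (and in the surrounding text to \cite{de2007combined}). Your write-up reproduces exactly the expected strategy: pick a minimal idempotent $p$ in $T=\bigcap_{m}\overline{FS(\langle y_n\rangle_{n=m}^{\infty})}$ using $T\cap K(\beta\mathbb{N})\neq\emptyset$, observe $A\in p$ since $A$ is central$^{*}$, prove that $w^{-1}A$ is central$^{*}$ via the continuous homomorphism $\widetilde{m_w}$ and the fact that $\overline{w\mathbb{N}}\cap K(\beta\mathbb{N})\neq\emptyset$, and then run the $A^{\star}$ recursion to extract the sum subsystem. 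All steps are valid; in particular the identification $K(\overline{w\mathbb{N}})=\overline{w\mathbb{N}}\cap K(\beta\mathbb{N})$ and the fact that a continuous surjective homomorphism maps smallest ideal to smallest ideal are the right tools for the key lemma. There is nothing to correct.
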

\begin{proof}
\cite[Theorem 2.4]{bergelson1994ip}.
\end{proof}
A similar result in this direction in the case of dyadic rational
numbers has been proved by Bergelson, Hindman and Leader.
\begin{thm}
\label{notramseyd} There exists a finite partition $\mathbb{D}\setminus\{0\}=\bigcup_{i=1}^{r}A_{i}$
such that there do not exist $i\in\{1,2,\ldots,r\}$ and a sequence
$\langle x_{n}\rangle_{n=1}^{\infty}$ with 
\[
FS(\langle x_{n}\rangle_{n=1}^{\infty})\cup FP(\langle x_{n}\rangle_{n=1}^{\infty})\subseteq A_{i}.
\]
\end{thm}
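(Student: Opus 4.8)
The plan is to exhibit an explicit finite coloring of $\mathbb{D}\setminus\{0\}$ and to show that for every sequence $\langle x_{n}\rangle_{n=1}^{\infty}$ two elements of $FS(\langle x_{n}\rangle_{n=1}^{\infty})\cup FP(\langle x_{n}\rangle_{n=1}^{\infty})$ receive different colors. The main case is the positive dyadics $\mathbb{D}^{+}$ (signs can be carried along as one extra coordinate of the coloring, the products of signs being determined by parity of the length of the product). On $\mathbb{D}^{+}$ there are two natural multiplicative homomorphisms, the $2$-adic valuation $v\colon(\mathbb{D}^{+},\cdot)\to(\mathbb{Z},+)$ and $\lambda=\log_{2}\colon(\mathbb{D}^{+},\cdot)\to(\mathbb{R},+)$. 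Their difference $\Phi=\lambda-v$ is again a homomorphism, $\Phi\colon(\mathbb{D}^{+},\cdot)\to([0,\infty),+)$, and in fact $\Phi(x)=\log_{2}(\text{odd part of }x)$, so that $\Phi(x)=0$ precisely when $x$ is a power of $2$. The decisive feature of $\Phi$ is that it linearizes finite products: $\Phi\bigl(\prod_{n\in H}x_{n}\bigr)=\sum_{n\in H}\Phi(x_{n})$ for every finite $H$.

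The heart of the matter is the contrasting behaviour of $\Phi$ on finite sums. Writing $x_{n}=2^{v_{n}}u_{n}$ with $u_{n}$ odd, on a set $H$ where the valuations $v_{n}$ are distinct the minimal valuation is attained uniquely, so $v\bigl(\sum_{n\in H}x_{n}\bigr)=\min_{n\in H}v_{n}$, while $\log_{2}\bigl(\sum_{n\in H}x_{n}\bigr)$ differs from $\max_{n\in H}\log_{2}x_{n}$ by less than $1$. Hence
\[
\Phi\Bigl(\sum_{n\in H}x_{n}\Bigr)=\max_{n\in H}\bigl(v_{n}+\Phi(x_{n})\bigr)-\min_{n\in H}v_{n}+O(1).
\]
Thus a product accumulates $\Phi$ over all of its factors, whereas a sum only feels the two extreme terms, namely the largest magnitude and the smallest valuation. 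I would exploit this asymmetry through a lacunary/multiscale coloring of the value $\Phi(x)$ (grouping $\Phi$ into blocks $[2^{k},2^{k+1})$ and reading $k$ modulo a fixed integer), together with a distinguished colour for the powers of $2$.

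Given such a coloring, suppose $FS(\langle x_{n}\rangle_{n=1}^{\infty})\cup FP(\langle x_{n}\rangle_{n=1}^{\infty})\subseteq A_{s}$. First dispose of the degenerate cases: if infinitely many $x_{n}$ are powers of $2$, then the corresponding products have $\Phi=0$ while any two-term sum of them has $\Phi>0$, which is already a clash; and a repeated valuation $v_{i}=v_{j}$ forces $x_{i}+x_{j}$ into a strictly larger valuation, which lets me pass to a cleaner subconfiguration. In the main case the valuations are distinct, hence unbounded, and I compare, for a fixed $i$ and large $j$, the two-element product $x_{i}x_{j}$, for which $\Phi=\Phi(x_{i})+\Phi(x_{j})$, against the two-element sum $x_{i}+x_{j}$, for which $\Phi\approx(v_{j}-v_{i})+\Phi(x_{j})$. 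As $j\to\infty$ the valuation gap $v_{j}-v_{i}$ tends to infinity, driving the sum into ever higher blocks while the product lags behind, so the two cannot permanently share a colour.

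The step I expect to be the genuine obstacle is making this separation robust against an adversary who tunes the odd parts so that $\Phi(x_{n})$ is itself comparable to the valuation gaps: then the ratio of the sum's $\Phi$ to the product's $\Phi$ can be held near a constant, potentially evading any single lacunary scale, and one must control it simultaneously for all finite sets $H$ rather than only for pairs. Overcoming this is exactly where a genuinely multiscale coloring is required, together with a careful accounting of the binary carries concealed in the $O(1)$ terms; verifying that no assignment of valuations and odd parts can keep the extreme-driven growth of sums in lockstep with the factor-by-factor growth of products, across all $H$ at once, is the crux of the proof.
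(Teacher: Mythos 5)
There is a genuine gap here: your text is a plan whose decisive step is explicitly left open, and as it stands it does not prove the theorem. You never write down the coloring (how many blocks mod what, which auxiliary coordinates), and you yourself flag that ``verifying that no assignment of valuations and odd parts can keep \ldots in lockstep \ldots is the crux of the proof'' without supplying that verification. The difficulty is not cosmetic. Any coloring that factors through $\Phi(x)=\log_{2}(\text{odd part of }x)$ is structurally vulnerable on the product side: the set $\{\log_{2}u\colon u\ \text{odd}\}$ is a subsemigroup of $([0,\infty),+)$ isomorphic to the multiplicative semigroup of odd naturals, so by Hindman's theorem the adversary can choose the odd parts $u_{n}$ so that \emph{all} finite products land in a single cell of your lacunary $\Phi$-coloring, whatever its parameters. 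The entire burden then falls on showing that the finite sums cannot simultaneously comply for any choice of valuations $v_{n}$ --- and that is exactly the part you defer. Two further points would need repair even within your framework: the error term in $\Phi\bigl(\sum_{n\in H}x_{n}\bigr)$ is $O(\log|H|)$, not $O(1)$, since $\sum_{n\in H}x_{n}$ can exceed $\max_{n\in H}x_{n}$ by a factor $|H|$; and the case of repeated valuations (e.g.\ all $x_{n}$ odd integers, where $v_{n}\equiv 0$, the ``valuation gap'' vanishes, and $v$ of a sum is governed by binary carries) is dismissed with ``pass to a cleaner subconfiguration'' without saying how, even though $FS$ of a subsequence does not capture the sums involving discarded terms' interactions you may have relied on.

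For comparison: the paper itself gives no argument at all --- it quotes the result from Bergelson, Hindman and Leader \cite[Theorem 5.9]{bergelson1999additive}. Their proof does share your underlying intuition that addition is governed by the extremes of the dyadic expansion while multiplication accumulates over all factors, but it is implemented quite differently: the coloring is built from the full binary support of $x$ (positions of the leading and trailing digits, their parities, and the digit pattern between them), not from the single real parameter $\Phi(x)$, and the contradiction is extracted through a careful case analysis of how supports combine under sums (disjoint unions up to carries) versus products (convolution-like spreading). Collapsing all of that information into $\Phi$ discards precisely the arithmetic of carries that their argument exploits, so if you want to pursue your route you should expect to reintroduce the digit-level data in some form rather than hoping a multiscale partition of a single real invariant suffices.
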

\begin{proof}
\cite[Theorem 5.9]{bergelson1999additive}. 
\end{proof}
In \cite{bergelson1999additive}, the authors also presented the following
conjecture and question.
\begin{conjecture}
There exists a finite partition $\mathbb{Q}\setminus\{0\}=\bigcup_{i=1}^{r}A_{i}$
such that there do not exists $i\in\{1,2,\ldots,r\}$ and a sequence
$\langle x_{n}\rangle_{n=1}^{\infty}$ with 
\[
FS(\langle x_{n}\rangle_{n=1}^{\infty})\cup FP(\langle x_{n}\rangle_{n=1}^{\infty})\subseteq A_{i}.
\]
\end{conjecture}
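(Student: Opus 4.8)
The plan is to produce an explicit finite colouring of $\mathbb{Q}\setminus\{0\}$ with no monochromatic $FS\cup FP$, adapting the $2$-adic mechanism behind Theorem \ref{notramseyd}. The guiding principle is that each $p$-adic valuation $v_p\colon(\mathbb{Q}\setminus\{0\},\,\cdot)\to(\mathbb{Z},\,+)$ is a homomorphism, so a monochromatic sequence $\langle x_n\rangle$ is forced to satisfy two kinds of constraint: a multiplicative one, since $v_p$ sends $FP(\langle x_n\rangle)$ to the IP-set of subset sums of $\langle v_p(x_n)\rangle$, and an additive one coming from the ultrametric inequality $v_p(\sum_{n\in F}x_n)\ge\min_{n\in F}v_p(x_n)$ together with the ``carrying'' phenomenon that a sum of $p$-adic units has strictly larger valuation. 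First I would make these precise: comparing the colour of a singleton with that of a pair shows that monochromaticity of $FP$ under the colour $v_p\bmod m$ forces every $v_p(x_n)\equiv 0\pmod m$.

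The encouraging case is that of a sequence all of whose terms are $2$-adic units, which in particular captures every sequence living in an odd-prime world such as $x_n=p^{-n}$ or $x_n=p^{cn}$. Here the colour $v_2\bmod 2$ already suffices: a subset sum over $|F|=j$ terms is a sum of $j$ odd $2$-adic units, hence $\equiv j\pmod 2$, so pairs have $v_2\ge 1$ while singletons have $v_2=0$; monochromaticity would force $x_i\equiv -x_j\pmod 4$ for all $i\neq j$, and three indices then give $2x_i\equiv 0\pmod 4$, contradicting oddness. Thus the valuation colourings genuinely defeat all ``unit'' sequences, and I would next try to combine colourings at finitely many primes with an archimedean (logarithmic) colour of $|x|$ to attack sequences whose valuations are spread out.

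The main obstacle, and the reason this is still a conjecture while the dyadic case is settled, is twofold. First, a single fixed-modulus valuation or digit colouring is blind to base-$p$ geometric sequences with large gaps: for $x_n=2^{cn}$ with $c$ large, every element of $FS$ is a sum of widely separated powers of $2$ whose low-order $2$-adic data agree with those of a single power, so no colouring depending on $v_2$ and a bounded number of binary digits can separate $FS$ from $FP$; defeating these requires a genuinely scale-sensitive colouring. Second, and this is what distinguishes $\mathbb{Q}$ from $\mathbb{D}$, the multiplicative group $(\mathbb{Q}\setminus\{0\},\,\cdot)$ has infinite rank, so a finite colouring can constrain the multiplicative structure in only finitely many prime coordinates; the adversary is free to build $FP$ in the remaining coordinates, throwing the entire burden onto the additive side and forcing any successful colouring to work uniformly no matter which primes the multiplicative structure exploits. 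I expect the crux of a complete proof to be exactly this marriage of a scale-robust colouring with uniformity over all primes, most plausibly via a compactness argument across the $p$-adic and archimedean completions simultaneously, converting a putative monochromatic sequence into a forbidden idempotent-type configuration.
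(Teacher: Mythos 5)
There is a fundamental mismatch here: the statement you are addressing is presented in the paper as an open \emph{conjecture} (quoted from Bergelson, Hindman and Leader), not as a theorem, and the paper supplies no proof of it. Only the dyadic analogue, Theorem \ref{notramseyd}, is known. Your submission is likewise not a proof but a research plan, and by your own account the central difficulties remain unresolved. What you actually establish is a small special case: the colouring $v_2 \bmod 2$ defeats any sequence consisting entirely of $2$-adic units (the singleton-versus-pair valuation comparison and the $2x_i \equiv 0 \pmod 4$ contradiction are correct as far as they go). But no finite partition of $\mathbb{Q}\setminus\{0\}$ is ever exhibited that handles the general case, and you explicitly concede the two obstacles that make the conjecture hard: sequences such as $x_n = 2^{cn}$ whose finite sums are $2$-adically indistinguishable from single terms at any bounded depth, and the fact that a finite colouring can only see finitely many prime coordinates while $FP$ may be supported on arbitrarily many. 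The proposed remedy --- ``a compactness argument across the $p$-adic and archimedean completions'' --- is an aspiration, not an argument; nothing in the proposal converts a putative monochromatic sequence into a contradiction in the general case.

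To be clear about why the gap is genuine and not merely cosmetic: the known proof of the dyadic case (Theorem 5.9 of the cited work of Bergelson, Hindman and Leader) relies on an explicit colouring of $\mathbb{D}\setminus\{0\}$ built from the binary support of a dyadic rational, exploiting that $\mathbb{D}$ has bounded multiplicative complexity (its multiplicative group modulo sign is generated by the single prime $2$). That mechanism does not transfer to $\mathbb{Q}$ for exactly the reason you identify, and no one has yet supplied the missing uniform-over-all-primes colouring. If you wish to contribute something verifiable, the honest deliverable would be a precise lemma of the form ``no sequence of $S$-units for a fixed finite set of primes $S$ admits a monochromatic $FS\cup FP$ under such-and-such explicit colouring,'' stated and proved in full; the passage from that to all of $\mathbb{Q}\setminus\{0\}$ is the open problem, and it cannot be waved through.
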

\begin{problem}
Does there exists a finite partition $\mathbb{R}\setminus\{0\}=\bigcup_{i=1}^{r}A_{i}$
such that there do not exists $i\in\{1,2,\ldots,r\}$ and a sequence
$\langle x_{n}\rangle_{n=1}^{\infty}$ with 
\[
FS(\langle x_{n}\rangle_{n=1}^{\infty})\cup FP(\langle x_{n}\rangle_{n=1}^{\infty})\subseteq A_{i}?
\]
\end{problem}
In the section 2, we shall first work on some combined algebraic properties
near $0$ in the ring of quaternions, denoted by $\mathbb{H}$. The
ring being non abelian, is a division ring having an idempotent $0$.
In section 3, for any suitable dense subsemigroup $S$ of $\left(\left(0,\infty\right),+\right)$,
our aim is to establish partitition regularity among two matrices
using using additive and multiplicative structure of $\beta S$, Stone-\v{C}ech
compactification of $S$.

\section{Combined Algebraic and Multiplicative Properties near idempotent
in relation with Quaternion Rings}

In the following discussion our aim is to extend Theorem \ref{ip*-1}
and Theorem \ref{central*} for dense subsemigroups $(\mathbb{H},+)$
in the appropriate context.
\begin{defn}
If $S$ is a dense subsemigroup of $(\mathbb{H},+)$, we define $0^{+}(S)=\{p\in\beta S_{d}:(\forall r>0)(B_{d}(r)\in p)\}$. 
\end{defn}
It is proved in \cite{hindman1999semigroup}, that $0^{+}(S)$ is
a compact right topological subsemigroup of $(\beta S_{d},+)$ which
is disjoint from $K(\beta S_{d})$ and hence gives some new information
which are not available from $K(\beta S_{d})$. Being compact right
topological semigroup $0^{+}(S)$ contains minimal idempotents of
$0^{+}(S)$. A subset $A$ of $S$ is said to be IP{*}-set near 0
if it belongs to every idempotent of $0^{+}(S)$ and a subset $C$
of $S$ is said to be central$^{*}$ set near $0$ if it belongs to
every minimal idempotent of $0^{+}(S)$. In \cite{de2009image} the
authors applied the algebraic structure of $0^{+}(S)$ on their investigation
of image partition regularity near $0$ of finite and infinite matrices.
Article \cite{de2011image} used algebraic structure of $0^{+}(\mathbb{R})$
to investigate image partition regularity of matrices with real entries
from $\mathbb{R}$.
\begin{defn}
Let $S$ be a dense subsemigroup of $(\mathbb{H},+)$. A subset $A$
of $S$ is said to be an IP set near $0$ if there exists a sequence
$\langle x_{n}\rangle_{n=1}^{\infty}$ with $\sum_{n=1}^{\infty}x_{n}$
converges such that $FS(\langle x_{n}\rangle_{n=1}^{\infty})\subseteq A$.
We call a subset $D$ of $S$ is an IP$^{*}$ set near $0$ if for
every subset $C$ of $S$ which is IP set near $0$, $C\cap D$ is
IP set near $0$. 
\end{defn}
From \cite[Theorem 3.2]{akbari2012semigroup}, it follows that for
a dense subsemigroup $S$ of $(\mathbb{H},+)$ a subset $A$ of $S$
is an \emph{IP set near $0$} if only if there exists some idempotent
$p\in0^{+}(S)$ with $A\in p$. Further it can be easily observed
that a subset $D$ of $S$ is an IP$^{*}$ set near $0$ if and only
if it belongs to every idempotent of $0^{+}(S)$.

Given $c\in\mathbb{H}\setminus\{0\}$ and $p\in\beta\mathbb{H}_{d}$,
the product $c\cdot p$ and $p\cdot c$ are defined in $(\beta\mathbb{H}_{d},\cdot)$.
One has $A\subseteq\mathbb{H}$ is a member of $c\cdot p$  if and
only if $c^{-1}A=\{x\in\mathbb{H}:c\cdot x\in A\}$ is a member of
$p$ and similarly for $c\cdot p$.
\begin{lem}
\label{ip*0} Let $S$ be a dense subsemigroup of $(\mathbb{H},+)$
such that $S\cap\mathbb{H}$ is a subsemigroup of $(\mathbb{H}\setminus\{0\},\cdot)$.
If $A$ is an IP set near $0$ in $S$ then $sA$ is also an IP set
near $0$ for every $s\in S\cap B_{d}(1)\setminus\{0\}$. Further
if $A$ is a an IP$^{*}$ set near $0$ in $(S,+)$ then both $s^{-1}A$
and $As^{-1}$ are IP$^{*}$ set near $0$ for every $s\in S\cap B_{d}(1)\setminus\{0\}$.
\end{lem}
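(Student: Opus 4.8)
The plan is to route both assertions through a single mechanism: left and right translation by $s$. For $s\in S\setminus\{0\}$ set $\lambda_{s}(x)=sx$ and $\rho_{s}(x)=xs$. Since $S$ is a multiplicative subsemigroup of $\mathbb{H}\setminus\{0\}$, both maps send $S$ into $S$, and since the two distributive laws $s(x+y)=sx+sy$ and $(x+y)s=xs+ys$ hold in the ring $\mathbb{H}$, both $\lambda_{s}$ and $\rho_{s}$ are homomorphisms of $(S,+)$. Their continuous extensions to $\beta S_{d}$ are precisely $p\mapsto s\cdot p$ and $p\mapsto p\cdot s$ (directly from the definition of the products $c\cdot p$ and $p\cdot c$ recalled above), and because the continuous extension of a homomorphism is a homomorphism of $(\beta S_{d},+)$ (\cite[Corollary 4.22]{hindman1998algebra}), each of these maps carries additive idempotents to additive idempotents.

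The first genuine step is to check that these translations keep us inside $0^{+}(S)$. Here I would use that the quaternion norm is multiplicative: for every $r>0$ one has $s^{-1}B_{d}(r)=\{x\in S:|s|\,|x|<r\}=B_{d}(r/|s|)$, and symmetrically $B_{d}(r)s^{-1}=B_{d}(r/|s|)$. Since $p\in 0^{+}(S)$ contains every ball about $0$, it contains $B_{d}(r/|s|)$, and hence $B_{d}(r)\in s\cdot p$ and $B_{d}(r)\in p\cdot s$ for all $r>0$. Together with the homomorphism remark, this shows that for every idempotent $p\in 0^{+}(S)$ both $s\cdot p$ and $p\cdot s$ are again idempotents of $0^{+}(S)$. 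Note that $|s|<1$ is not needed for this argument, although it does give $|sx|=|s|\,|x|<|x|$, so the translates genuinely contract toward $0$.

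For the first claim, by the characterisation of IP sets near $0$ (\cite[Theorem 3.2]{akbari2012semigroup}) choose an idempotent $p\in 0^{+}(S)$ with $A\in p$, and put $q=s\cdot p$, an idempotent of $0^{+}(S)$ by the previous step. Because $s$ is invertible in the division ring $\mathbb{H}$, $\lambda_{s}$ is injective, so $s^{-1}(sA)=A$ as subsets of $S$; hence $A\in p$ yields $sA\in s\cdot p=q$, and $sA$ (which lies in $S$ by multiplicative closure) is an IP set near $0$. For the second claim, fix an arbitrary idempotent $p\in 0^{+}(S)$. By the defining property of the products, $s^{-1}A\in p$ is equivalent to $A\in s\cdot p$, and $As^{-1}\in p$ is equivalent to $A\in p\cdot s$. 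Since $s\cdot p$ and $p\cdot s$ are idempotents of $0^{+}(S)$ and $A$, being IP$^{*}$ near $0$, belongs to every idempotent of $0^{+}(S)$, we get $A\in s\cdot p$ and $A\in p\cdot s$, whence $s^{-1}A\in p$ and $As^{-1}\in p$. As $p$ was arbitrary, $s^{-1}A$ and $As^{-1}$ lie in every idempotent of $0^{+}(S)$ and are therefore IP$^{*}$ sets near $0$.

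The step I expect to be the main obstacle is verifying that $p\mapsto s\cdot p$ and $p\mapsto p\cdot s$ really are additive homomorphisms preserving idempotency. This rests on the standard extension theorem, but it must be applied separately to the two one-sided translations because $\mathbb{H}$ is non-commutative, and one has to confirm that the extended operation coming from $\lambda_{s},\rho_{s}$ coincides with the operation $+$ under which $0^{+}(S)$ is the right topological semigroup in question. Once that compatibility is pinned down, everything else reduces to the multiplicativity of the norm and the invertibility of $s$.
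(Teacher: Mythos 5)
Your argument is correct, but it takes a genuinely different route from the paper's. The paper works combinatorially: for the first claim it picks a sequence $\langle x_{n}\rangle_{n=1}^{\infty}$ with convergent sum and $FS(\langle x_{n}\rangle_{n=1}^{\infty})\subseteq A$ and notes that $\langle sx_{n}\rangle_{n=1}^{\infty}$ witnesses that $sA$ is IP near $0$ (left distributivity gives $FS(\langle sx_{n}\rangle_{n=1}^{\infty})\subseteq sA$); for the second claim it uses the ``meets every IP set near $0$'' characterisation of IP$^{*}$ near $0$ and reduces $B\cap s^{-1}A\neq\emptyset$ to $A\cap sB\neq\emptyset$ via the first claim. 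You instead lift everything to idempotent ultrafilters: $\lambda_{s}$ and $\rho_{s}$ are additive endomorphisms of $S$, so their extensions $p\mapsto s\cdot p$ and $p\mapsto p\cdot s$ preserve additive idempotency, and multiplicativity of the quaternion norm ($s^{-1}B_{d}(r)=B_{d}(r/|s|)\cap S$) keeps the images inside $0^{+}(S)$; both claims then drop out of the definitions of $s\cdot p$ and $p\cdot s$ together with the idempotent characterisations of IP and IP$^{*}$ near $0$ that the paper records just before the lemma. Your route costs the extra (easy but necessary) check that $s\cdot p$ and $p\cdot s$ stay in $0^{+}(S)$, which the sequence argument avoids entirely; in exchange it handles $s^{-1}A$ and $As^{-1}$ symmetrically in one stroke, makes transparent that $|s|<1$ plays no role, and yields directly the stronger form of the conclusion required by the paper's literal definition of IP$^{*}$ near $0$ --- that $B\cap s^{-1}A$ is itself IP near $0$ and not merely nonempty, since $B$ and $s^{-1}A$ lie in a common idempotent of $0^{+}(S)$ --- whereas the paper's proof reaches that strengthening only implicitly through the idempotent equivalence.
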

\begin{proof}
Since $A$ is an IP set near $0$ then by \cite[Theorem 3.1]{hindman1999semigroup}
there exists a sequence $\langle x_{n}\rangle_{n=1}^{\infty}$ in
$S$ with the property that $\sum_{n=1}^{\infty}x_{n}$ converges
and FS$(\langle x_{n}\rangle_{n=1}^{\infty})\subseteq A$. This implies
that $\sum_{n=1}^{\infty}(s\cdot x_{n})$ is also convergent and FS$(\langle sx_{n}\rangle_{n=1}^{\infty})\subseteq sA$.
This proves that $sA$ is also an IP set near $0$. Similarly we can
prove that $As^{-1}$ is also IP set near $0$ for every $s\in S\cap B_{d}(1)\setminus\{0\}$.
For the second let $A$ be a an IP$^{*}$ set near $0$ and $s\in S\cap B_{d}(1)\setminus\{0\}$.
To prove that $s^{-1}A$ is a an IP$^{*}$ set near $0$ it is sufficient
to show that if $B$ is any IP set near $0$ then $B\cap s^{-1}A\neq\emptyset$.
Since $B$ is an IP set near $0$, $sB$ is also an IP set near $0$
by the first part of the proof, so that $A\cap sB\neq\emptyset$.
Choose $t\in sB\cap A$ and $k\in B$ such that $t=sk$. Therefore
$k\in s^{-1}A$ so that $B\cap s^{-1}A\neq\emptyset$. 
\end{proof}
Given $A\subseteq S$ and $s\in S$, $s^{-1}A=\{t\in S$ : $st\in A\}$
and $-s+A=\{t\in S$ : $s+t\in A\}$. In case of product we have to
keep in mind the order of elements as the product is noncommutative
here.
\begin{defn}
Let $\langle x_{n}\rangle_{n=1}^{\infty}$ be a sequence in the ring
$(\mathbb{H},+,\cdot)$, and let $k\in\mathbb{N}$. Then FP$(\langle x_{n}\rangle_{n=1}^{k})$
is the set of all products of terms of $\langle x_{n}\rangle_{n=1}^{k}$
in any order with no repetitions. Similarly FP$(\langle x_{n}\rangle_{n=1}^{\infty})$
is the set of all products of terms of $\langle x_{n}\rangle_{n=1}^{\infty}$
in any order with no repetitions.
\end{defn}
\begin{thm}
\label{ip*comb} Let $S$ be a dense subsemigroup of $(\mathbb{H},+)$,
such that $S\cap B_{d}(1)\setminus\{0\}$ is a subsemigroup of $(B_{d}(1)\setminus\{0\},\cdot)$.
Also let $\langle x_{n}\rangle_{n=1}^{\infty}$ be a sequence in $S$
such that $\sum_{n=1}^{\infty}x_{n}$ converges to $0$ and $A$ be
a IP$^{*}$ set near $0$ in $S$. Then there exists a sum subsystem
$\langle y_{n}\rangle_{n=1}^{\infty}$ of $\langle x_{n}\rangle_{n=1}^{\infty}$
such that 
\[
FS(\langle y_{n}\rangle_{n=1}^{\infty})\cup FP(\langle y_{n}\rangle_{n=1}^{\infty})\subseteq A.
\]
\end{thm}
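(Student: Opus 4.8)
The plan is to mimic the classical proof of Theorem \ref{ip*-1} (the Bergelson--Hindman IP$^*$ result), adapting each step to the ``near zero'' setting inside $0^+(S)$. The construction is inductive: I would build the sum subsystem $\langle y_n\rangle_{n=1}^\infty$ one term at a time, maintaining at each stage the invariant that every finite sum and every finite product of the terms chosen so far lies in $A$, and moreover that a suitable ``shifted'' version of $A$ remains an IP$^*$ set near $0$ so that the next term can be chosen. The key point is that $A$, being IP$^*$ near $0$, belongs to every idempotent of $0^+(S)$, and Lemma \ref{ip*0} guarantees that for every admissible scalar $s \in S \cap B_d(1) \setminus \{0\}$ both $s^{-1}A$ and $As^{-1}$ are again IP$^*$ near $0$; this is what lets the multiplicative structure be forced into $A$ alongside the additive structure.

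Concretely, I would fix an idempotent $p \in 0^+(S)$ and work with it throughout. First I would record the two standard membership facts that drive the induction. On the additive side, since $A \in p$ and $p$ is idempotent, the set $A^\star = \{x \in A : -x + A \in p\}$ lies in $p$, and for every $x \in A^\star$ we have $-x + A^\star \in p$; this is the usual device (Lemma 4.14 in \cite{hindman1998algebra}) for forcing all finite sums into $A$. On the multiplicative side, Lemma \ref{ip*0} tells me that for each already-chosen partial product $w$ (which will be a unit in $B_d(1)\setminus\{0\}$, using the subsemigroup hypothesis on $S \cap B_d(1)\setminus\{0\}$), the sets $w^{-1}A$ and $Aw^{-1}$ are IP$^*$ near $0$, hence belong to $p$ as well. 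Because only finitely many partial sums and finitely many partial products have been created at any finite stage, intersecting the finitely many corresponding ``good'' sets still yields a member of $p$, and since $p \in 0^+(S)$ every element of $p$ meets $B_d(r)$ for arbitrarily small $r$.

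The inductive step then proceeds as follows. Suppose $y_1, \dots, y_{m}$ have been chosen with $\max H_m$ determined, so that all finite sums and all finite products (in every order, no repetitions) of $y_1,\dots,y_m$ lie in $A$. Let $E_m$ denote the finite collection of all these finite sums together with all these finite products, and form the set $B_m = A^\star \cap \bigcap_{a \in E_m}\bigl(-a + A^\star\bigr) \cap \bigcap_{v \in F_m}\bigl(v^{-1}A \cap A v^{-1}\bigr)$, where $F_m$ is the finite set of partial products together with $1$. Each intersectand is a member of $p$, so $B_m \in p$, and in particular $B_m$ is an IP set near $0$: there is a sequence $\langle x_n\rangle_{n > \max H_m}$ whose finite sums lie in $B_m$ and which sums to something arbitrarily small. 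I would choose a finite block $H_{m+1}$ with $\min H_{m+1} > \max H_m$ and set $y_{m+1} = \sum_{t \in H_{m+1}} x_t$, taking the block small enough (using convergence to $0$) that $y_{m+1} \in S \cap B_d(1)\setminus\{0\}$ so the multiplicative hypotheses apply. Membership of $y_{m+1}$ in $B_m$ then simultaneously extends every partial sum and every partial product into $A$: adding $y_{m+1}$ to any $a \in E_m$ lands in $A^\star \subseteq A$, and multiplying any partial product $v$ on either side by $y_{m+1}$ lands in $A$ because $y_{m+1} \in v^{-1}A \cap Av^{-1}$.

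The main obstacle, and the place demanding real care, is the noncommutativity of $\mathbb{H}$ together with the ``in any order, no repetitions'' definition of FP. Each new term $y_{m+1}$ can be inserted in front of, behind, or in the interior of an existing partial product, so I must ensure $y_{m+1}$ belongs to $v^{-1}A \cap A v^{-1}$ not merely for full partial products $v$ but for every partial product of the previously chosen terms --- and, for interior insertions, I would split a product $v = v_1 v_2$ and require $y_{m+1} \in v_1^{-1}(A v_2^{-1})$, which is again an IP$^*$ set near $0$ by two applications of Lemma \ref{ip*0}. Since at stage $m$ there are only finitely many such factorizations to track, the intersection defining $B_m$ remains a finite intersection of members of $p$ and hence stays in $p$; this is exactly where the two-sided strength of Lemma \ref{ip*0} (both $s^{-1}A$ and $As^{-1}$ IP$^*$ near $0$) is essential. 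I would conclude by noting that the invariant, maintained for all $m$, yields $FS(\langle y_n\rangle_{n=1}^\infty) \cup FP(\langle y_n\rangle_{n=1}^\infty) \subseteq A$, completing the proof.
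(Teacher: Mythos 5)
Your overall strategy --- fix an idempotent $p\in 0^{+}(S)$, use $A^{\star}=\{x\in A:\,-x+A\in p\}$ to drive the additive side, use Lemma \ref{ip*0} to put $v^{-1}A$ and $Av^{-1}$ into $p$ for the multiplicative side, and intersect the finitely many relevant sets at each inductive stage --- is exactly the paper's argument. Your treatment of interior insertions is in fact more careful than the paper's: since FP is defined with products ``in any order,'' a new term can appear as $v_{1}y_{m+1}v_{2}$, and your requirement $y_{m+1}\in v_{1}^{-1}(Av_{2}^{-1})$ (an IP$^{*}$ set near $0$ by two applications of Lemma \ref{ip*0}) covers cases that the paper's intersection over only $s^{-1}A^{\star}$ and $A^{\star}s^{-1}$ for full partial products $s$ does not.

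There is, however, one genuine gap: you never arrange for $y_{m+1}$ to be a block sum of the \emph{given} sequence. You fix an arbitrary idempotent $p\in 0^{+}(S)$ and then argue that $B_{m}\in p$ makes $B_{m}$ an IP set near $0$, hence ``there is a sequence $\langle x_{n}\rangle_{n>\max H_{m}}$ whose finite sums lie in $B_{m}$'' --- but the sequence witnessing that $B_{m}$ is an IP set near $0$ has nothing to do with the original $\langle x_{n}\rangle_{n=1}^{\infty}$, so the $\langle y_{n}\rangle_{n=1}^{\infty}$ you build need not be a sum subsystem of $\langle x_{n}\rangle_{n=1}^{\infty}$, which is precisely what the theorem asserts. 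The repair is the first line of the paper's proof: because $\sum_{n=1}^{\infty}x_{n}$ converges to $0$, the set $T=\bigcap_{m=1}^{\infty}\mathrm{cl}_{\beta S_{d}}FS(\langle x_{n}\rangle_{n=m}^{\infty})$ is a nonempty compact subsemigroup of $0^{+}(S)$, and the idempotent $p$ must be chosen inside $T$. Then $\{\sum_{t\in H}x_{t}:H\in\mathcal{P}_{f}(\mathbb{N}),\ \min H>\max H_{m}\}\in p$ at every stage, so this set meets $B_{m}$ and the block $H_{m+1}$ with $\min H_{m+1}>\max H_{m}$ exists. With that single adjustment (and your smallness condition ensuring $y_{m+1}\in S\cap B_{d}(1)\setminus\{0\}$, which is automatic for tails of a sequence converging to $0$) your proof goes through.
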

\begin{proof}
Since $\sum_{n=1}^{\infty}x_{n}$ converges to 0, from \cite[Theorem 3.1]{hindman1999semigroup}
it follows that we can find some idempotent $p\in0^{+}(S)$ for which
$FS(\langle x_{n}\rangle_{n=1}^{\infty})\in p$. In fact $T=\bigcap_{m=1}^{\infty}\mbox{cl}_{\beta S_{d}}FS(\langle y_{n}\rangle_{n=m}^{\infty})\subseteq0^{+}(S)$
and $p\in T$. Again, since $A$ is an IP{*} set near $0$ in $S$,
by the above Lemma \ref{ip*0} for every $s\in S\cap B_{d}(1)\setminus\{0\}$,
both $s^{-1}A,As^{-1}\in p$. Let $A^{\star}=\{s\in A$ : $-s+A\in p\}$.
Then by ~\cite[Lemma 4.14]{hindman1998algebra} $A^{\star}\in p$.
We can choose $y_{1}\in A^{\star}\cap FS(\langle x_{n}\rangle_{n=1}^{\infty})$.
Inductively let $m\in\mathbb{N}$ and $\langle y_{i}\rangle_{i=1}^{m}$,
$\langle H_{i}\rangle_{i=1}^{m}$ in $\mathcal{P}_{f}(\mathbb{N})$
be chosen with the following properties:

\begin{enumerate}
\item $i\in\{1,2,\ldots,m-1\}$ $\max H_{i}<\min H_{i+1}$; 
\item If $y_{i}=\sum_{t\in H_{i}}x_{t}$ then $\sum_{t\in H_{m}}x_{t}\in A^{\star}$
and $\mbox{FP}(\langle y_{i}\rangle_{i=1}^{m})\subseteq A$. 
\end{enumerate}
We observe that $\{\sum_{t\in H}x_{t}$ : $H\in\mathcal{P}_{f}(\mathbb{N}),\min H>\max H_{m}\}\in p$.
Let $B=\{\sum_{t\in H}x_{t}$ : $H\in\mathcal{P}_{f}(\mathbb{N}),\min H>\max H_{m}\}$,
let $E_{1}=FS(\langle y_{i}\rangle_{i=1}^{m})$ and $E_{2}=AP(\langle y_{i}\rangle_{i=1}^{m})$.
Now consider 
\[
D=B\cap A^{\star}\cap\bigcap_{s\in E_{1}}(-s+A^{\star})\cap\bigcap_{s\in E_{2}}(s^{-1}A^{\star})\cap\bigcap_{s\in E_{2}}(A^{\star}s^{-1}).
\]
Then $D\in p$. Now choose $y_{m+1}\in D$ and $H_{m+1}\in\mathcal{P}_{f}(\mathbb{N})$
such that $\min H_{m+1}>\max H_{m}$. Putting $y_{m+1}=\sum_{t\in H_{m+1}}x_{t}$
shows that the induction can be continued and this proves the theorem. 
\end{proof}

\section{An Application of additive and multiplicative structure of $\beta S$}

We shall like to produce an alternative proof of the above Theorem
\ref{extensionipr0} using additive and multiplicative structure of
$\beta S$. We need the following notion.
\begin{thm}
\label{extensionipr0} Let $u,\,v\in\mathbb{N}$. Let $M$ be a finite
image partition regular matrix over $\mathbb{N}$ of order $u\times v$,
and let $N$ be an infinite image partition regular near $0$ matrix
over a dense subsemigroup $S$ of $((0,\infty),+)$. Then 
\[
\left(\begin{array}{cc}
M & O\\
O & N
\end{array}\right)\,
\]
 is image partition regular near $0$ over $S$.
\end{thm}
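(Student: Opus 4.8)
The plan is to reduce both diagonal blocks to a single central set near $0$ and then stack the two solutions. Fix a minimal idempotent $p$ in the smallest ideal of $0^{+}(S)$; every member of $p$ is central near $0$, and since $p\in 0^{+}(S)$ we have $B_{d}(\delta)\cap S\in p$ for every $\delta>0$. Given a finite coloring $S\setminus\{0\}=\bigcup_{i=1}^{r}C_{i}$ and some $\delta>0$, exactly one cell $C=C_{i}$ lies in $p$, and replacing $C$ by $C\cap B_{d}(\delta)$ (still a member of $p$) yields a single central set near $0$ contained in one color class and in $(0,\delta)$. The whole point of routing the argument through one idempotent is that both blocks are then solved \emph{inside this one set} $C$, which forces the two monochromatic images into the same cell; this is exactly the synchronization that a naive, separate application of the two partition-regularity hypotheses cannot deliver.

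For the finite block I would use that a finite image partition regular matrix over $\mathbb{N}$ is image partition regular near $0$ over $S$, and more precisely that its image meets every central set near $0$: for the set $C\in p$ there is $\vec{x}\in S^{v}$ with every entry of $M\vec{x}$ lying in $C$. Here the multiplicative structure of $\beta S$ enters, since an $\mathbb{N}$-solution of $M$ must be scaled by a small element $s\in S\cap B_{d}(1)\setminus\{0\}$ so that its image lands near $0$; controlling $s\cdot(M\vec{x})=M(s\vec{x})$ within $C$ is governed precisely by the interaction between multiplication and the idempotent $p$ developed in Section 2 (cf.\ Lemma \ref{ip*0}).

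For the infinite block I would invoke the analogous statement for $N$: because $N$ is infinite image partition regular near $0$, its image meets every central set near $0$, so there is a vector $\vec{z}\in S^{\omega}$ with every entry of $N\vec{z}$ lying in $C$. This is the step I expect to be the main obstacle, since for infinite matrices image partition regularity does not in general yield solutions inside an arbitrary central set. Establishing (or correctly invoking) this central solvability near $0$ is exactly where the algebraic structure of the compact right topological semigroup $0^{+}(S)$---the existence of minimal idempotents together with the closure properties of their members---is indispensable, rather than mere topological compactness.

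Finally I would stack the two solutions: the combined vector $\bigl(\vec{x},\vec{z}\bigr)$ is a solution of the block matrix, and since every entry of $M\vec{x}$ and of $N\vec{z}$ lies in $C\subseteq C_{i}\cap(0,\delta)$, the entire image is monochromatic and contained in $(0,\delta)$. As the coloring and $\delta$ were arbitrary, the block matrix is image partition regular near $0$ over $S$, which completes the proof.
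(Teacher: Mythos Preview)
You have correctly located the weak point of your own argument: the step where you need the infinite block $N$ to have a solution inside an \emph{arbitrary} central set near $0$. Image partition regularity near $0$ only tells you that $I(N)\neq\emptyset$; it does \emph{not} force $I(N)$ to contain every minimal idempotent of $(0^{+}(S),+)$, and in the analogous $\mathbb{N}$ setting there are infinite image partition regular matrices that are not centrally image partition regular. So the synchronizing ultrafilter you picked---a minimal idempotent of $(0^{+}(S),+)$---may simply lie outside $I(N)$, and then your appeal to ``central solvability'' of $N$ fails. Nothing in Section~2 (in particular Lemma~\ref{ip*0}, which concerns IP$^{*}$ sets near $0$) repairs this.

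The paper sidesteps this obstruction entirely by changing the semigroup operation used for synchronization. One shows (Lemma~\ref{lem:I(A)}) that for \emph{any} image partition regular near $0$ matrix $A$ the set $I(A)$ is a left ideal of $(0^{+},\cdot)$, and that for a \emph{finite} such matrix it is a two-sided ideal. Hence $K(0^{+},\cdot)\subseteq I(M)$, while $I(N)$, being a nonempty left ideal, meets $K(0^{+},\cdot)$; therefore $I(M)\cap I(N)\neq\emptyset$. Picking any $p$ in this intersection (no idempotency needed), one color class $E_{k}\in p$ simultaneously admits $\vec{x}$ with $M\vec{x}\in E_{k}^{u}$ and $\vec{y}$ with $N\vec{y}\in E_{k}^{\omega}$, and stacking finishes the proof. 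The point is that the multiplicative ideal structure gives the intersection $I(M)\cap I(N)\neq\emptyset$ for free, with no hypothesis on $N$ beyond image partition regularity near $0$; your additive-idempotent route would require the strictly stronger (and unproven) assumption that $N$ is \emph{centrally} image partition regular near $0$.
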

\begin{defn}
Let $S$ be a subsemigroup of $((0,\infty),+)$ and let $A$ be a
finite or infinite matrix with entries from $\mathbb{Q}$. Then $I(A)=\{p\in0^{+}$
: for every $P\in p$, there exists $\vec{x}$ with entries from $S$
such that all entries of $A\vec{x}$ are in $P\}$.
\end{defn}
The following lemma can be easily proved as \cite[Lemma 2.5]{hindman2003infinite}.
\begin{lem}
Let $A$ be a matrix, finite or infinite with entries from $\mathbb{Q}$.

(a) The set $I(A)$ is compact and $I(A)\neq\emptyset$ if and only
if $A$ is image partition regular near $0$.

(b) If $A$ is finite image partition regular matrix, then$I(A)$
is a sub-semigroup of $(0^{+},+)$.
\end{lem}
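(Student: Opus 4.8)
The plan is to follow the template of \cite[Lemma 2.5]{hindman2003infinite}, adapting everything from $\beta S_{d}$ to the near-zero semigroup $0^{+}$ by intersecting throughout with the balls $B_{d}(r)$. For a vector $\vec{x}$ with entries from $S$, write $E(\vec{x})$ for the set of entries of $A\vec{x}$. Then the defining condition $p\in I(A)$ reads precisely: every $P\in p$ contains $E(\vec{x})$ for some $\vec{x}$. I would treat part (a) as two separate claims, compactness and the equivalence, and part (b) as an ``adding ultrafilters'' computation. For compactness, note first that $I(A)\subseteq 0^{+}$ and that $0^{+}$ is closed, hence compact, in $\beta S_{d}$; so it suffices to show $I(A)$ is closed. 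I would show its complement is open: if $p\notin I(A)$, then either $p\notin 0^{+}$ (and $\beta S_{d}\setminus 0^{+}$ is an open neighbourhood of $p$ disjoint from $I(A)$), or some $P\in p$ contains no $E(\vec{x})$, in which case the basic open set $\overline{P}=\{q:P\in q\}$ is a neighbourhood of $p$ missing $I(A)$, since any $q\ni P$ fails the defining condition at $P$. Thus $I(A)$ is closed and therefore compact.

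For the equivalence in (a), the direction $I(A)\neq\emptyset\Rightarrow$ image partition regular near $0$ is immediate: given $p\in I(A)$, a finite partition of $S$, and $r>0$, choose the cell $C\in p$; since $p\in 0^{+}$ we have $C\cap B_{d}(r)\in p$, so some $E(\vec{x})\subseteq C\cap B_{d}(r)$, producing a monochromatic image near $0$. The substantive direction is the converse, and I expect it to be the main obstacle. Call $Q\subseteq S$ \emph{bad} if there is $r_{Q}>0$ such that $E(\vec{x})\not\subseteq Q\cap B_{d}(r_{Q})$ for every $\vec{x}$, and set $\mathcal{H}=\{B_{d}(r):r>0\}\cup\{S\setminus Q:Q\text{ bad}\}$. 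Any ultrafilter $p\supseteq\mathcal{H}$ then lies in $0^{+}$ and contains no bad set, so every $P\in p$ contains some $E(\vec{x})$; hence $p\in I(A)$. The crux is to verify that $\mathcal{H}$ has the finite intersection property.

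To check the finite intersection property, given radii $r_{1},\dots,r_{k}$ and bad sets $Q_{1},\dots,Q_{l}$ with witnesses $r_{Q_{1}},\dots,r_{Q_{l}}$, I would apply image partition regularity near $0$ to the partition $\{Q_{1},\,Q_{2}\setminus Q_{1},\,\dots,\,S\setminus\bigcup_{j}Q_{j}\}$ with radius $r^{*}=\min\{r_{1},\dots,r_{k},r_{Q_{1}},\dots,r_{Q_{l}}\}$. The resulting monochromatic solution $\vec{x}$ satisfies $E(\vec{x})\subseteq C\cap B_{d}(r^{*})$ for one cell $C$; it cannot land inside any $Q_{j}$-piece, since that would give $E(\vec{x})\subseteq Q_{j}\cap B_{d}(r_{Q_{j}})$, contradicting badness of $Q_{j}$. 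Hence $C=S\setminus\bigcup_{j}Q_{j}$, and $E(\vec{x})$ exhibits a point of $\bigcap_{i}B_{d}(r_{i})\cap\bigcap_{j}(S\setminus Q_{j})$, which is therefore nonempty. This is simply the standard passage from a partition-regular family to an ultrafilter, with the near-zero refinement carried through the radii.

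For part (b), let $p,q\in I(A)$ with $A$ finite of order $u\times v$. Since $0^{+}$ is a subsemigroup of $(\beta S_{d},+)$, we have $p+q\in 0^{+}$. Given $P\in p+q$, put $P^{\#}=\{s\in S:-s+P\in q\}$, which lies in $p$ by definition of the sum. As $p\in I(A)$, choose $\vec{x}\in S^{v}$ with $E(\vec{x})=\{b_{1},\dots,b_{u}\}\subseteq P^{\#}$, so each $-b_{i}+P\in q$. Here the finiteness of $A$ is essential: $P'=\bigcap_{i=1}^{u}(-b_{i}+P)\in q$ because it is a finite intersection of members of $q$. As $q\in I(A)$, choose $\vec{y}\in S^{v}$ with $E(\vec{y})=\{c_{1},\dots,c_{u}\}\subseteq P'$. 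Then $\vec{z}=\vec{x}+\vec{y}\in S^{v}$ satisfies $(A\vec{z})_{i}=b_{i}+c_{i}\in P$ for every $i$, since $c_{i}\in P'\subseteq -b_{i}+P$; thus $E(\vec{z})\subseteq P$ and $p+q\in I(A)$. I would remark that the finite intersection forming $P'$ is exactly the point where finiteness is used, and it is precisely this step that is unavailable for infinite matrices.
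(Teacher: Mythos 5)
Your proposal is correct and takes essentially the route the paper intends: the paper gives no argument of its own, deferring entirely to \cite[Lemma 2.5]{hindman2003infinite}, and your proof is precisely the near-zero adaptation of that standard argument --- clopen basic neighbourhoods $\{q:P\in q\}$ for closedness (hence compactness) of $I(A)$, the finite-intersection-property/ultrafilter construction (with the balls $B_{d}(r)$ carried along) for the equivalence with image partition regularity near $0$, and the finite intersection $\bigcap_{i=1}^{u}(-b_{i}+P)\in q$ for the subsemigroup property in part (b). Your closing remark correctly identifies finiteness of $A$ as the step that fails for infinite matrices, which is exactly why the paper's Lemma \ref{lem:I(A)} only claims a one-sided (left ideal) statement in the infinite case.
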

Next, we shall investigate the multiplicative structure of $I(A)$.
In the following Lemma \ref{lem:I(A)}, we shall see that if $A$
is a image partition regular near $0$ then $I(A)$ is a left ideal
of $(0^{+},\cdot)$. It is also a two-sided ideal of $(0^{+},\cdot)$,
provided $A$ is a finite image partition regular near $0$.
\begin{lem}
\label{lem:I(A)} Let $A$ be a matrix, finite or infinite with entries
from $\mathbb{Q}$.

(a) If $A$ is an image partition regular near $0$, then $I(A)$
is a left ideal of $(0^{+},\cdot)$.

(b) If $A$ is a finite image partition regular near $0$, then $I(A)$
is a two-sided ideal of $(0^{+},\cdot)$.
\end{lem}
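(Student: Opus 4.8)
The plan is to check both ideal conditions straight from the definition of $I(A)$, using two standing facts. First, $(0^{+},\cdot)$ is closed under products: for $p,q\in 0^{+}$, each $r>0$ and each $x\in B_d(1)\cap S$ we have $x^{-1}B_d(r)=\{y\in S:|xy|<r\}\supseteq B_d(r)\cap S$, and since $B_d(r)\cap S$ lies in every element of $0^{+}$ while $B_d(1)\cap S\in p$ and $B_d(1)\cap S\in q$, both $p\cdot q$ and $q\cdot p$ contain every $B_d(r)$; hence they lie in $0^{+}$. Second, because $A$ has entries from $\mathbb{Q}$, scalar multiplication passes through $A$ entrywise: for $s\in S$ and any vector $\vec{x}$ over $S$ one has $A(s\vec{x})=s(A\vec{x})$. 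Throughout I use that $S$ is closed under the products that arise, so that the scaled vectors again have entries in $S$.

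For (a), take $q\in 0^{+}$ and $p\in I(A)$; I must show $q\cdot p\in I(A)$. Fix $P\in q\cdot p$. By definition of the product, $\{s\in S:s^{-1}P\in p\}\in q$, and intersecting this with $B_d(1)\cap S\in q$ lets me pick $s\in S\cap B_d(1)$ with $s^{-1}P\in p$. Since $p\in I(A)$, there is a vector $\vec{x}$ over $S$ with every image entry $(A\vec{x})_i\in s^{-1}P$, that is $s\,(A\vec{x})_i\in P$ for all rows $i$. Then $\vec{y}=s\vec{x}$ is a vector over $S$ with $(A\vec{y})_i=s\,(A\vec{x})_i\in P$ for all $i$. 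As $P$ was an arbitrary member of $q\cdot p$, we get $q\cdot p\in I(A)$, i.e. $0^{+}\cdot I(A)\subseteq I(A)$.

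For (b) assume $A$ is finite, of order $u\times v$. By (a) it remains to prove the right-ideal condition $p\cdot q\in I(A)$ for $p\in I(A)$ and $q\in 0^{+}$. Fix $P\in p\cdot q$, so $W=\{x\in S:x^{-1}P\in q\}\in p$. Since $p\in I(A)$ and $W\in p$, choose a vector $\vec{x}$ over $S$ with $(A\vec{x})_i\in W$ for every $i$, whence $(A\vec{x})_i^{-1}P\in q$ for $i=1,\dots,u$. This is the one place finiteness is used: there are only $u$ rows, so $V=B_d(1)\cap S\cap\bigcap_{i=1}^{u}(A\vec{x})_i^{-1}P$ is a finite intersection of members of $q$ and hence lies in $q$; in particular $V\neq\emptyset$. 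Picking $t\in V$ and setting $\vec{y}=t\vec{x}$ (again a vector over $S$) gives $(A\vec{y})_i=(A\vec{x})_i\,t\in P$ for all $i$, so $p\cdot q\in I(A)$. Together with (a), $I(A)$ is a two-sided ideal.

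The routine verifications are the multiplicative closure of $0^{+}$ and the fact that the scaled vectors $s\vec{x}$ and $t\vec{x}$ still have entries in $S$ and near $0$, both immediate from closure of $S\cap B_d(1)$ under products and $|sx|<|x|$ for $s\in B_d(1)$. The genuinely load-bearing step is the finite intersection in (b): each row imposes the constraint $(A\vec{x})_i^{-1}P\in q$ on the common multiplier $t$, and an ultrafilter is closed only under finite intersections, so a single $t$ meeting all constraints exists exactly when $A$ has finitely many rows. This is precisely why the infinite case in (a) yields only a left ideal, while finiteness in (b) upgrades it to a two-sided ideal.
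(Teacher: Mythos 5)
Your proof is correct and follows essentially the same route as the paper's: for (a) you pull a single scalar $s$ through $A$ on the left, and for (b) you use that the $u$ sets $(A\vec{x})_i^{-1}P$ form a finite intersection in the ultrafilter $q$ to find one common multiplier $t$. Your added checks (that $0^{+}$ is closed under $\cdot$ and that finiteness is exactly what makes the intersection work) are sound refinements of the same argument rather than a different approach.
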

\begin{proof}
(a). Let  $A$ be a $u\times v$ image partition regular matrix, where
$u,v\in\mathbb{N}\cup\{\omega\}$. Let $p\in0^{+}$ and $q\in I(A)$.
Also let $U\in p\cdot q$. Then\\
 $\{x\in S:\,x^{-1}U\in q\}\in p$. Choose $z\in\{x\in S:\,x^{-1}U\in q\}$.
Then $z^{-1}U\in q$. So there exists $\vec{x}$ with entries from
$S$ such that $y_{j}\in z^{-1}U$ for $0\leq j<u$ where $\vec{y}=A\vec{x}$,

$\vec{y}=\left(\begin{matrix}y_{0}\\
y_{1}\\
y_{2}\\
\vdots
\end{matrix}\right)$and $\vec{x}=\left(\begin{matrix}x_{0}\\
x_{1}\\
x_{2}\\
\vdots
\end{matrix}\right)$; $\vec{y}$ and $\vec{x}$ are $u\times1$ and $v\times1$ matrices
respectively. Now $y_{i}\in z^{-1}U$ for $0\leq i<u$ implies that
$zy_{i}\in U$ for $0\leq i<u$. Let $\vec{x'}=z\vec{x}$ and $\vec{y'}=z\vec{y}$.
Then $\vec{y'}=A\vec{x'}$. So there exists $\vec{x'}$ with entries
from $S$ such that all entries of $A\vec{x'}$ are in $U$. Therefore
$p.q\in I(A)$. So $I(A)$ is a left ideal of $(0^{+},\cdot)$.

(b) Let $A$ be a $u\times v$ matrix, where $u,v\in\mathbb{N}$.
By previous lemma , $I(A)$ is a left ideal. We now show that $I(A)$
is a right ideal of $(0^{+},\cdot)$. Let $p\in\beta S$ and $q\in I(A)$.
Now let $U\in q\cdot p$. Then $\{x\in S:x^{-1}U\in p\}\in q$. So
there exists $\vec{x}$ with entries in $S$ such that $y_{i}\in\{x\in S:x^{-1}U\in p\}$
for $0\leq i<u$, where $\vec{y}=A\vec{x}$, $\vec{y}=\left(\begin{matrix}y_{0}\\
\vdots\\
y_{u-1}
\end{matrix}\right)$ and $\vec{x}=\left(\begin{matrix}x_{0}\\
\vdots\\
x_{v-1}
\end{matrix}\right)$; $\vec{y}$ and $\vec{x}$ are $u\times1$ and $v\times1$ matrices
respectively. Now for $0\leq i<u,y_{i}\in\{x\in S:x^{-1}U\in p\}$.
Hence $y_{i}^{-1}U\in p$ for $0\leq i<u$. This implies $\bigcap_{i=0}^{u-1}y_{i}^{-1}U\in p$.
So $\bigcap_{i=0}^{u-1}y_{i}^{-1}U\neq\emptyset$. Let $z\in\bigcap_{i=0}^{u-1}y_{i}^{-1}U$.
Therefore $z\in y_{i}^{-1}U$ for all $i\in\{0,1,2,.....,u-1\}$.
Hence $y_{i}z\in U$ for $0\leq i<u$. Let $\vec{x'}=\vec{x}z$ and
$\vec{y'}=\vec{y}z$. Then $\vec{y'}=A\vec{x'}$. So there exists
$\vec{x'}$ with entries from $S$ such that all entries of $A\vec{x'}$
are in $U$. Thus $q\cdot p\in I(A)$. Therefore $I(A)$ is also a
right ideal of $(\beta\mathbb{N},\cdot)$. Hence $I(A)$ is a two-sided
ideal of $(0^{+},\cdot)$.
\end{proof}
\noindent \begin{verbatim}


\end{verbatim}
\begin{proof}[Alternative proof of Theorem 2.2.7]
\noindent  Let $r\in\mathbb{N}$ be given and $\epsilon>0$. Let
$\mathbb{Q}=\bigcup_{i=1}^{r}E_{i}$. Suppose that $A$ be a $u\times v$
matrix where $u,v\in\mathbb{N}$. Also let $A=\left(\begin{matrix}M & 0\\
0 & N
\end{matrix}\right)$. Now by previous Lemma \ref{lem:I(A)}, $I(M)$ is a two-sided ideal
of $(0^{+},\cdot)$. So $K(0^{+},\cdot)\subseteq I(A)$. Also by same
Lemma \ref{lem:I(A)}, $I(N)$ is a left ideal of $(0^{+},\cdot)$.
Therefore $K(0^{+},\cdot)\cap I(N)\neq\emptyset$. Hence $I(M)\cap I(N)\neq\emptyset$.
Now choose $p\in I(M)\cap I(N)$. Since $\mathbb{Q}=\bigcup_{i=1}^{r}E_{i}$,
there exists $k\in\{1,2,.....,r\}$ such that $E_{k}\in p$. Thus
by definition of $I(M)$ and $I(N)$, there exist $\vec{x}\in S^{v}$
and $\vec{y}\in S^{\omega}$ such that $M\vec{x}\in E_{k}^{u}$ and
$N\vec{y}\in E_{k}^{\omega}$. Take$\vec{z}=\left(\begin{matrix}\vec{x}\\
\vec{y}
\end{matrix}\right)$. Then $A\vec{z}=\left(\begin{matrix}M\vec{x}\\
N\vec{y}
\end{matrix}\right)$. So $A\vec{z}\in E_{k}^{\omega}$. Therefore $A=\left(\begin{matrix}M & 0\\
0 & N
\end{matrix}\right)$ is image partition regular near 0.
\end{proof}
\bibliographystyle{amsplain}
\addcontentsline{toc}{section}{\refname}\bibliography{Paper_3}

\end{document}